\tikzstyle{vertex}=[circle, draw, inner sep=2pt, minimum size=6pt]
\newcommand{\vertex}{\node[vertex]}
\newcommand{\E}{\mu}
\newcommand{\C}{\mathcal{C}}
\newcommand{\V}{\sigma^2}
\newcommand{\eE}{\E_{\chi_e}}
\newcommand{\eV}{\V_{\chi_e}}
\def\firstpage{3}
\begin{document}
{\fontfamily{cmss}\selectfont

\titlefigurecaption{\hspace{0.3cm}{\hspace{0.6cm}\LARGE \bf \sc \sffamily\color{white} Contemporary Studies in Discrete Mathematics}}

\title{\sc \sffamily On Equitable Coloring Parameters of Certain Wheel Related Graphs}

\author{\sc\sffamily K.P. Chithra$^{1,\ast}$, E.A. Shiny$^2$ and N.K. Sudev$^3$}
\institute {$^{1}$ Naduvath Mana, Nandikkara, Thrissur, India.\\
$^{2}$Centre for Studies in Discrete Mathematics, Vidya Academy of Science \& Technology, Thalakkottukara,Thrissur, India.\\ 
$^{3}$Centre for Studies in Discrete Mathematics, Vidya Academy of Science \& Technology,Thalakkottukara, Thrissur, India.
}


\titlerunning{Equitable Coloring Parameters of Wheel Related Graphs}
\authorrunning{K.P. Chithra, E.A. Shiny \& N.K. Sudev }

\mail{chithrasudev@gmail.com}

\received{15 May 2017}
\revised{24 July 2017}
\accepted{17 August 2017}
\published{20 October 2017.}

\abstracttext{Coloring the vertices of a graph $G$ subject to given conditions can be considered as a random experiment and corresponding to this experiment, a discrete random variable $X$ can be defined as the colour of a vertex chosen at random, with respect to the given type of coloring of $G$ and a probability mass function for this random variable can be defined accordingly. A proper coloring $\C$ of a graph $G$, which assigns colors to the vertices of $G$ such that the numbers of vertices in any two color classes differ by at most one, is called an equitable colouring of $G$.  In this paper, we study two statistical parameters of certain cycle related graphs, with respect to their equitable colorings.}
\keywords{Graph coloring; coloring mean; coloring variance; $\chi_e$-chromatic mean; $\chi_e$-chromatic variance.}

\msc{05C15, 62A01.}
\maketitle

\setstretch{1.30}

\section{Introduction}

For all  terms and definitions, not defined specifically in this paper, we refer to \cite{BM1,FH,DBW} and for the terminology of graph coloring, we refer to \cite{CZ1,JT1,MK1}.  For the terminology in Statistics, see \cite{VS1,SMR1}. Unless mentioned otherwise, all graphs considered in this paper are simple, finite, connected and undirected.

\textit{Graph coloring} is an assignment of colors or labels or weights to the vertices, edges and faces of a graph under consideration. Many practical and real life situations motivated in the development of different types of graph coloring problems. Unless stated otherwise, the graph coloring is meant to be an assignment of colors to the vertices of a graph subject to certain conditions. A \textit{proper coloring} of a graph $G$ is a colouring with respect to which vertices of $G$ are colored in such a way that no two adjacent vertices $G$ have the same color. 

Several parameters related to different types of graph coloring can be seen in the literature concerned. The \textit{chromatic number} of graphs is the minimum number of colors required in a proper coloring of the given graph. The general coloring sums with respect to different classes have been studied in \cite{KSC1,SCK1}. 

Coloring of the vertices of a given graph $G$ can be considered as a random experiment. For a proper $k$-coloring $\C = \{c_1,c_2, c_3, \ldots,c_k\}$ of $G$, we can define a random variable (\textit{r.v.}) $X$ which denotes the color ( or precisely, the subscript $i$ of the color $c_i$) of any arbitrary vertex in $G$. As the sum of all weights of colors of $G$ is equal to the number of vertices of $G$, the real valued function $f(i)$ defined by 
$$f(i)= 
\begin{cases}
\frac{\theta(c_i)}{|V(G)|}; &  i=1,2,3,\ldots,k\\
0; & \text{elsewhere}
\end{cases}$$ 
will be the \textit{probability mass function (p.m f.)}  of the random variable (\textit{r.v.}) $X$ (see \cite{SSCK1}), where $\theta(c_i)$ denotes the cardinality of the colour class of the colour $c_i$. 

If the context is clear, we can also say that $f(i)$ is the probability mass function of the graph $G$ with respect to the given coloring $\C$. Hence, we can also define the parameters like mean and variance of the random variable $X$, with respect to a general coloring of $G$ can be defined as follows.

An {\em equitable coloring} of a graph $G$ is a proper coloring $\C$  of $G$  which an assignment of colors to the vertices of $G$ such that the numbers of vertices in any two color classes differ by at most one. The \textit{equitable chromatic number} of a graph $G$ is the smallest number $k$ such that $G$ has an equitable coloring with $k$ colors. 

\begin{conjecture}[Equitable Coloring Conjecture (ECC)]
\textup{\cite{WM1}} For any connected graph $G$, which is neither a complete graph nor an odd cycle,  $\chi_e(G)\le \Delta(G)$, where $\Delta(G)$ is the maximum vertex degree in $G$.
\end{conjecture}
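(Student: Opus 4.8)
The statement is the Equitable Coloring Conjecture (ECC), and it concerns the invariant $\chi_e(G)$, namely the \emph{minimum} number of colors used by an equitable proper coloring; a proof in full generality is not expected, so the realistic plan is to describe the strategy by which one attacks the bound $\chi_e(G)\le\Delta(G)$ and to indicate how it is confirmed for the concrete families treated here. The natural baseline is the classical Hajnal--Szemer\'edi theorem, which guarantees an equitable coloring with $\Delta(G)+1$ colors for \emph{every} graph, so that $\chi_e(G)\le\Delta(G)+1$ holds unconditionally. The conjecture asserts that, when $G$ is connected and is neither a complete graph nor an odd cycle, one may always save the final color and achieve $\Delta(G)$ balanced color classes. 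First I would fix an extremal equitable $(\Delta+1)$-coloring and attempt to empty its smallest class by redistributing those vertices among the remaining $\Delta$ classes while preserving both properness and the balance condition that any two class sizes differ by at most one.

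The key technical step is this color-saving argument. Starting from the fixed $(\Delta+1)$-coloring, I would recolor along alternating Kempe-type chains, moving one vertex at a time so that at each stage no proper-coloring constraint is broken and the class-size imbalance does not grow; the target is to vacate a single class entirely. The two excluded families are exactly the obstructions to this: because $G$ is not complete, every vertex has a non-neighbor, which supplies the slack needed to merge two classes, and because $G$ is not an odd cycle, the parity obstruction that forces a third color on $C_{2n+1}$ is absent. I would try to organize these local moves by a discharging scheme in which each class carries charge measuring its deviation from the balanced target size near $|V(G)|/\Delta(G)$, and show that, away from the exceptional configurations, the charge can be pushed until one class is emptied. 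The main obstacle is precisely the descent from $\Delta+1$ to $\Delta$ colors: the recoloring moves can cascade and reintroduce imbalance elsewhere, and controlling this globally is exactly the difficulty that leaves the general conjecture open.

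It is essential to respect the distinction that keeps the list of exceptions short. Since $\chi_e$ is a minimum and not a threshold over all admissible numbers of colors, a balanced complete bipartite graph such as $K_{m,m}$ poses no difficulty whatsoever: the two-sided partition is already equitable, so $\chi_e(K_{m,m})=2\le\Delta(K_{m,m})$, and no such graph need be added to the excluded families, which remain only the complete graphs and the odd cycles. Within the scope of the present paper I would therefore not pursue the general conjecture but instead verify the bound constructively for each wheel-related graph under study: read off $\Delta(G)$ from its explicit adjacency structure, exhibit a partition of $V(G)$ into at most $\Delta(G)$ independent sets whose cardinalities differ by at most one, and thereby confirm $\chi_e(G)\le\Delta(G)$ directly. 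These explicit equitable colorings are then exactly the data needed to compute the $\chi_e$-chromatic mean and variance that are the paper's real objects of study.
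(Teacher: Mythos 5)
There is no proof in the paper to compare you against: the statement is Meyer's Equitable Coloring Conjecture, reproduced verbatim from \cite{WM1} as an \emph{open conjecture}, and the paper's only engagement with it is to cite partial verifications from the literature (bipartite graphs in \cite{LW1}, the degree studies in \cite{CLW}, trees in \cite{CL1}). You recognized this correctly, and your proposal is honestly calibrated as a research program rather than a proof. Your factual anchors are sound: the Hajnal--Szemer\'edi theorem does give the unconditional bound $\chi_e(G)\le\Delta(G)+1$, and your remark about $K_{m,m}$ is exactly the right distinction --- since $\chi_e$ is a minimum and not a threshold, $\chi_e(K_{m,m})=2$ poses no problem for ECC, whereas $K_{m,m}$ with $m$ odd (which admits no equitable coloring with exactly $m$ classes, as size-$2$ classes must lie within one side) is an exception only to the stronger Chen--Lih--Wu equitable $\Delta$-coloring conjecture. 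Conflating those two conjectures is the standard error here, and you avoided it.

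That said, be clear about what your middle paragraph does and does not contain: the Kempe-chain/discharging scheme for vacating the smallest class of a $(\Delta+1)$-coloring is stated at the level of intention only --- you specify no invariant that survives a recoloring move, no argument that the chains terminate, and no analysis of where non-completeness and non-odd-cycle hypotheses actually enter, and indeed no such argument is known; this is precisely why the conjecture remains open, as you yourself concede. So as a ``proof attempt'' it establishes nothing beyond the $\Delta+1$ baseline. Your fallback --- verifying $\chi_e(G)\le\Delta(G)$ constructively for the wheel-related families --- is consistent with what the paper implicitly provides: its explicit equitable colorings use $\tfrac{n}{2}+1$ or $\tfrac{n+3}{2}$ colors for $W_n$ (with $\Delta=n$), $n+1$ colors for $DW_n$, $F_n$ and $Bl_n$ (with $\Delta=2n$ or more), and four colors for $H_n$, $CH_n$, $SF_n$ and $CSF_n$ (with $\Delta\ge n$), all comfortably below $\Delta$ for the relevant $n$. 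In short: nothing in your proposal is wrong, but nothing in it proves the conjecture either, and the paper never claimed otherwise.
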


In \cite{LW1}, this conjecture was verified for connected bipartite graphs and following it, a study on the relation between the equitable chromatic number and the maximum degree of graphs in \cite{CLW} and a subsequent study on equitable coloring of trees has been done in \cite{CL1}. 

Motivated by the studies on different types graph colorings, coloring parameters and their applications, we extend the concepts of arithmetic mean and variance, two statistical parameters, to the theory of equitable graph coloring in this paper. Throughout the paper, we follow the convention that $0\le \theta(c_i)-\theta(c_j)\le 1$, when $i<j$.

\section{New Results}

Two types of chromatic means and variances corresponding to an equitable coloring of a graph $G$ are defined as follows.

\begin{definition}\textup{\cite{SCSK3}
Let $\C = \{c_1,c_2, c_3, \ldots,c_k\}$ be an equitable $k$-coloring of a given graph $G$ and $X$ be the random variable which denotes the number of vertices having a particular color in $\C$, with the $p.m.f$ $f(i)$.  Then, 
\begin{enumerate}\itemsep0mm
\item[(i)] the \textit{equitable coloring mean} of a coloring $\C$ of a given graph $G$, denoted by $\eE(G)$, is defined to be $\eE(G) =\sum\limits_{i=1}^{n}i\,f(i)$;
\item[(ii)]  \textit{equitable coloring variance} of a coloring $\C$ of a given graph $G$, denoted by $\eV(G)$, is defined to be $\eV(G) =\sum\limits_{i=1}^{n}i^2\,f(i)-\left(\sum\limits_{i=1}^{n}i\,f(i)\right)^2$.
\end{enumerate}}
\end{definition}

If the context is clear, the above defined parameters can respectively be called the equitable coloring mean and equitable coloring variance of the graph $G$ with respect to the coloring $\C$. In this paper, we study the equitable chromatic parameters for certain cycle related graph classes. 

A \textit{wheel graph} $W_n$ is a graph obtained by joining all vertices of a cycle $C_n$ to an external vertex. This external vertex may be called the \textit{central vertex} of $W_n$ and the cycle $C_n$ may be called the \textit{rim} of $W_n$. The following result determines the equitable colouring parameters of a wheel graph.

\begin{theorem}\label{Thm-1}
For a wheel graph $W_n=C_n+K_1$, we have
$$\eE(W_n)=
\begin{cases}
\frac{(n+2)^2}{4(n+1)}; & \text{if $n$ is even}\\
\frac{(n^2+4n+7)}{4(n+1)}; & \text{if $n$ is odd},
\end{cases}$$ 
and
$$\eV(W_n)=\begin{cases}
\frac{n(n+2)(n^2+2n+4)}{48(n+1)^2}; & \text{if $n$ is even}\\
\frac{n^4+4n^3+26n^2-44n-27}{48(n+1)^2}; & \text{if $n$ is odd}.
\end{cases}$$
\end{theorem}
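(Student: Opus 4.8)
The whole computation rests on pinning down the multiset of colour-class sizes in a minimum equitable colouring of $W_n$, after which the mean and variance follow by direct substitution into the p.m.f. The plan is to first exploit the fact that the central vertex $u$ of $W_n=C_n+K_1$ is adjacent to every rim vertex. Hence in any proper colouring the colour of $u$ is used on no other vertex, so the colour class containing $u$ is a singleton. By the equitability condition every colour class then differs in size from this singleton by at most one, which forces each class to have size $1$ or $2$. Consequently, if the colouring uses $k$ colours and one class is the singleton $\{u\}$, the remaining $n$ rim vertices are distributed among $k-1$ classes of size $1$ or $2$, and minimising $k$ amounts to using as many classes of size $2$ as possible.

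Next I would treat the two parities separately. For even $n$ the rim $C_n$ admits a partition into $n/2$ independent pairs (for instance pairing each $v_i$ with its antipode $v_{i+n/2}$, which are non-adjacent once $n\ge 4$); together with $\{u\}$ this gives an equitable colouring with $k=\frac{n}{2}+1$ colours whose class sizes are $n/2$ twos followed by a single one, and the counting argument above shows no equitable colouring uses fewer colours, so $\chi_e(W_n)=\frac{n}{2}+1$. For odd $n\ge 5$ the rim splits into $\frac{n-1}{2}$ independent pairs and one leftover vertex (delete one rim vertex to obtain a path on $n-1$ vertices and pair $v_i$ with $v_{i+(n-1)/2}$); adjoining $\{u\}$ yields $k=\frac{n+3}{2}$ colours with class sizes consisting of $\frac{n-1}{2}$ twos followed by two ones. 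In each case the multiset of class sizes is forced, so $\eE$ and $\eV$ are independent of the particular minimum equitable colouring chosen.

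With the sizes in hand and the ordering convention $\theta(c_1)\ge\theta(c_2)\ge\cdots$, the p.m.f. is $f(i)=\frac{2}{n+1}$ on the size-$2$ classes and $f(i)=\frac{1}{n+1}$ on the one or two trailing singletons. I would then evaluate $\eE=\sum i\,f(i)$ and $\sum i^2 f(i)$ using the closed forms $\sum_{i=1}^{m} i=\frac{m(m+1)}{2}$ and $\sum_{i=1}^{m} i^2=\frac{m(m+1)(2m+1)}{6}$ with $m=\frac{n}{2}$ in the even case and $m=\frac{n-1}{2}$ in the odd case, and obtain $\eV$ from $\sum i^2 f(i)-(\eE)^2$. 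After clearing the common denominator $48(n+1)^2$ and simplifying the resulting polynomials, these reduce to the stated closed forms.

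The genuinely non-routine part is the structural step: establishing that the singleton forced by the universal central vertex caps every class at size $2$, and exhibiting an explicit proper partition of the rim into the required independent pairs (plus, in the odd case, verifying that the leftover rim vertex and the centre give exactly two singletons). Everything after that is a mechanical evaluation of two finite sums followed by polynomial simplification. I would also flag the degenerate case $n=3$, where $W_3=K_4$ forces four singleton classes and the rim cannot be paired, so the odd-$n$ formulas should be read for $n\ge 5$.
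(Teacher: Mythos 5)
Your proposal is correct and follows essentially the same route as the paper: the universal central vertex forces a singleton colour class, equitability then caps every class at size $1$ or $2$, the two parities give class-size multisets of $\frac{n}{2}$ twos plus one singleton (resp.\ $\frac{n-1}{2}$ twos plus two singletons), and the mean and variance follow from the same p.m.f.\ and power-sum evaluations; your explicit independent pairings of the rim, the minimality count for $k$, and the exclusion of $n=3$ (where $W_3=K_4$ forces four singletons) are details the paper asserts without proof, and they genuinely tighten the argument. One caveat on your final claim that the sums ``reduce to the stated closed forms'': carrying out the odd-case computation gives $\eV(W_n)=\frac{n^4+4n^3+26n^2+44n-27}{48(n+1)^2}$ (check $n=5$: the class sizes $2,2,1,1$ give mean $\frac{13}{6}$ and variance $\frac{41}{36}$, which matches $+44n$ but not $-44n$), so the $-44n$ in the theorem is a sign typo, and the polynomial displayed at the end of the paper's own proof of this case is likewise erroneous; your plan, executed faithfully, corrects rather than reproduces the printed formula.
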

\begin{proof}~
Let $v$ be the central vertex of the wheel graph $W_n=C_n+K_1$ and $v_1,v_2,\ldots,v_n$ be the vertices of the cycle $C_n$. Since $v$ is adjacent to all other vertices of $W_n$, none of them can have the same colour of $v$. Hence, the colour class containing $v$ is a singleton set. Hence, every other colour class, with respect to an equitable colouring of $W_n$ must be a singleton or a $2$-element set. Here we need to consider the following cases.

\textit{Case-1:} Let $n$ be even. Then, $\frac{n}{2}+1$ colours are required in an equitable colouring of $W_n$. Let $\C=\{c_1,c_2,\ldots,c_\ell,c_{\ell+1}\}$ be an equitable colouring of $W_n$, where $\ell=\frac{n}{2}$. Note that the colour classes of $c_1,c_2,\ldots,c_\ell$ are $2$-element sets, while the colour class of $c_{\ell+1}$, which consists of the central vertex, is a singleton set (see Figure \ref{fig:e-wl} for illustration). Let $X$ be the random variable which represents the colour of an arbitrarily chosen vertex of $W_n$. Then, the corresponding \textit{p.m.f} of $G$ is 
$$f(i)=P(X=i)=
\begin{cases}
\frac{2}{n+1}; & \text{for}\ i=1,2,\ldots, \frac{n}{2}\\
\frac{1}{n+1}; & \text{for}\ i=\frac{n}{2}+1\\
0; & \text{elsewhere}.
\end{cases}$$
Hence, the corresponding equitable colouring mean of the wheel graph $W_n$ is given by $\eE(W_n)=\left(1+2+3+\ldots+\frac{n}{2}\right)\cdot \frac{2}{n+1} +\left(\frac{n}{2}+1\right)\cdot \frac{1}{n+1}=\frac{(n+2)^2}{4(n+1)}$ and the corresponding equitable colouring variance of $W_n$ is $\eV(W_n)= \left(1^2+2^2+3^2+\ldots+\frac{n^2}{4}\right)\cdot \frac{2}{n+1} +\left(\frac{n}{2}+1\right)^2\cdot \frac{1}{n+1}-\left(\frac{(n+2)^2}{4(n+1)}\right)^2=\frac{n(n+2)(n^2+2n+4)}{48(n+1)^2}$.

\textit{Case-2:} Let $n$ be odd. Then, $\frac{n-1}{2}+2$ colours are required in an equitable colouring of $W_n$. If $\C=\{c_1,c_2,\ldots,c_\ell,c_{\ell+1},c_{\ell+2}\}$ be an equitable colouring of $W_n$, where $\ell=\frac{n-1}{2}$, then the colour classes of $c_1,c_2,\ldots,c_\ell$ are $2$-element sets, while the colour classes of $c_{\ell+1}$ and $c_{\ell+2}$ are singleton sets (see Figure \ref{fig:e-wl} for illustration). Let $X$ be the random variable which represents the colour of an arbitrarily chosen vertex of $W_n$. Then, the corresponding \textit{p.m.f} of $G$ is 
$$f(i)=P(X=i)=
\begin{cases}
\frac{2}{n+1}; & \text{for}\ i=1,2,\ldots, \frac{n-1}{2}\\
\frac{1}{n+1}; & \text{for}\ i=\frac{n-1}{2}+1,\frac{n-1}{2}+2\\
0; & \text{elsewhere}.
\end{cases}$$
Then, the corresponding equitable colouring mean is $\eE=\left(1+2+3+\ldots+\frac{n-1}{2}\right)\cdot \frac{2}{n+1} +\left(\frac{n-1}{2}+1+\frac{n-1}{2}+2\right)\cdot \frac{1}{n+1}=\frac{(n^2+4n+7)}{4(n+1)}$ and $\eV= \left(1^2+2^2+3^2+\ldots+\frac{(n-1)^2}{4}\right)\cdot \frac{2}{n+1} +\left[\left(\frac{n-1}{2}+1\right)^2+\left(\frac{n-1}{2}+2\right)^2\right]\cdot \frac{1}{n+1}-\left(\frac{n^+4n+7}{4(n+1)}\right)^2 =\frac{n^4+76n^3+386n^2+692n+333}{48(n+1)^2}$.
\end{proof}

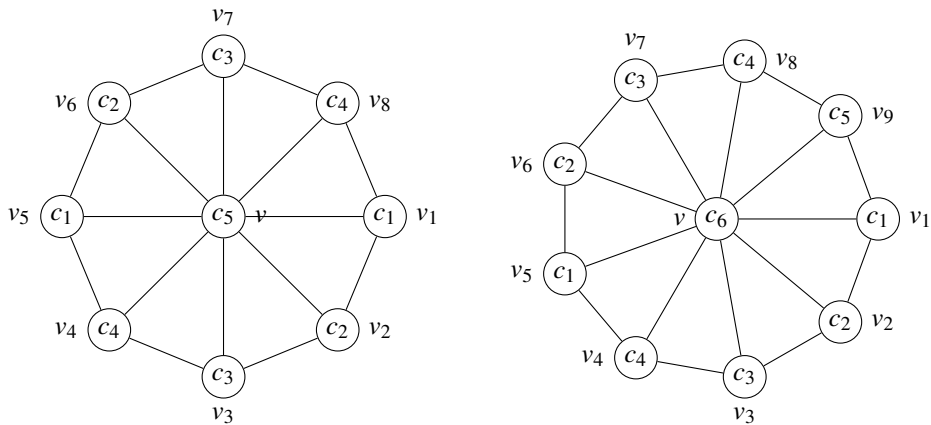
\begin{figure}[h!]
\begin{center}
\begin{tikzpicture}[scale=0.85] 
\vertex (v1) at (0:2.5) [label=right:$v_{1}$]{$c_1$};
\vertex (v2) at (315:2.5) [label=right:$v_{2}$]{$c_2$};
\vertex (v3) at (270:2.5) [label=below:$v_{3}$]{$c_3$};
\vertex (v4) at (225:2.5) [label=left:$v_{4}$]{$c_4$};
\vertex (v5) at (180:2.5) [label=left:$v_{5}$]{$c_1$};
\vertex (v6) at (135:2.5) [label=left:$v_{6}$]{$c_2$};
\vertex (v7) at (90:2.5) [label=above:$v_{7}$]{$c_3$};
\vertex (v8) at (45:2.5) [label=right:$v_{8}$]{$c_4$};
\vertex (v) at (0:0) [label=right:$v$]{$c_5$};
\path 
(v1) edge (v2)
(v1) edge (v8)
(v1) edge (v)
(v2) edge (v3)
(v2) edge (v)
(v3) edge (v4)
(v3) edge (v)
(v4) edge (v5)
(v4) edge (v)
(v5) edge (v6)
(v5) edge (v)
(v6) edge (v7)
(v6) edge (v)
(v7) edge (v8)
(v7) edge (v)
(v8) edge (v)
;
\end{tikzpicture}
\qquad 
\begin{tikzpicture}[scale=0.85] 
\vertex (v1) at (0:2.5) [label=right:$v_{1}$]{$c_1$};
\vertex (v2) at (320:2.5) [label=right:$v_{2}$]{$c_2$};
\vertex (v3) at (280:2.5) [label=below:$v_{3}$]{$c_3$};
\vertex (v4) at (240:2.5) [label=left:$v_{4}$]{$c_4$};
\vertex (v5) at (200:2.5) [label=left:$v_{5}$]{$c_1$};
\vertex (v6) at (160:2.5) [label=left:$v_{6}$]{$c_2$};
\vertex (v7) at (120:2.5) [label=above:$v_{7}$]{$c_3$};
\vertex (v8) at (80:2.5) [label=right:$v_{8}$]{$c_4$};
\vertex (v9) at (40:2.5) [label=right:$v_{9}$]{$c_5$};
\vertex (v) at (0:0) [label=left:$v$]{$c_6$};
\path 
(v1) edge (v2)
(v2) edge (v3)
(v3) edge (v4)
(v4) edge (v5)
(v5) edge (v6)
(v6) edge (v7)
(v7) edge (v8)
(v8) edge (v9)
(v9) edge (v1)
(v1) edge (v)
(v2) edge (v)
(v3) edge (v)
(v4) edge (v)
(v5) edge (v)
(v6) edge (v)
(v7) edge (v)
(v8) edge (v)
(v9) edge (v)
;
\end{tikzpicture}
\end{center}
\caption{\small Equitable colouring of wheel graphs}\label{fig:e-wl}
\end{figure}

A \textit{double wheel graph} $DW_n$ is a graph defined by $2C_n+K_1$. That is, a double wheel graph is a graph obtained by joining all vertices of the two disjoint cycles to an external vertex. The following result discusses the equitable colouring parameters of a double wheel graph.

\begin{theorem}\label{Thm-1a}
For a double wheel graph $DW_n=2C_n+K_1$, we have
$$\eE(DW_n)= \frac{(n+1)^2}{2n+1}$$ 
and
$$\eV(DW_n)=\frac{n^4+2n^3+2n^2+n}{3(2n+1)^2}.$$
\end{theorem}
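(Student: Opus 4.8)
The plan is to mirror the argument of Theorem~\ref{Thm-1}, exploiting the fact that the two rim cycles of $DW_n$ share no edge with one another. As in the wheel case, the central vertex $v$ is adjacent to every one of the $2n$ cycle vertices, so its colour class must be a singleton; consequently, in any equitable colouring of $DW_n$ every colour class has cardinality $1$ or $2$.

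The key structural observation---and the reason $DW_n$ admits a single formula rather than the two parity-dependent cases of Theorem~\ref{Thm-1}---is that a vertex of the first cycle and a vertex of the second cycle are never adjacent. Thus I would pair the $i$-th vertex of one copy of $C_n$ with the $i$-th vertex of the other copy and assign each such pair the colour $c_i$ for $i=1,2,\ldots,n$. Each pair is an independent set, and along each cycle the colours $c_1,\ldots,c_n$ are pairwise distinct, so the colouring is proper. Assigning the remaining colour $c_{n+1}$ to $v$, one obtains an equitable colouring with $n$ colour classes of size $2$ and one class of size $1$, accounting for all $2n+1$ vertices. This forces the \textit{p.m.f.}
$$f(i)=
\begin{cases}
\frac{2}{2n+1}; & i=1,2,\ldots,n\\
\frac{1}{2n+1}; & i=n+1\\
0; & \text{elsewhere}.
\end{cases}$$

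With the \textit{p.m.f.} in hand, both parameters follow by direct summation. For the mean I would compute $\eE(DW_n)=\frac{2}{2n+1}\sum_{i=1}^{n}i+\frac{n+1}{2n+1}=\frac{n(n+1)}{2n+1}+\frac{n+1}{2n+1}$, which collapses to $\frac{(n+1)^2}{2n+1}$. For the variance I would first evaluate the second moment $\sum_i i^2 f(i)=\frac{2}{2n+1}\sum_{i=1}^{n}i^2+\frac{(n+1)^2}{2n+1}=\frac{n(n+1)}{3}+\frac{(n+1)^2}{2n+1}$, using $\sum_{i=1}^{n}i^2=\frac{n(n+1)(2n+1)}{6}$, and then subtract $\bigl(\eE(DW_n)\bigr)^2$.

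The only genuinely delicate step is the final simplification of the variance. After placing the three terms over the common denominator $3(2n+1)^2$, one must verify that the numerator $n(n+1)(2n+1)^2+3(n+1)^2(2n+1)-3(n+1)^4$ factors, after pulling out $(n+1)$, as $n(n+1)(n^2+n+1)$, which expands to $n^4+2n^3+2n^2+n$. I expect this algebraic collapse---rather than any combinatorial difficulty---to be the main place where care is needed.
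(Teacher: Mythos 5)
Your proposal is correct and follows essentially the same route as the paper: both establish that the central vertex forces a singleton colour class, deduce $n$ colour classes of size $2$ plus one singleton, and compute the mean and variance from the resulting \textit{p.m.f.}; your algebra, including the factorisation of the variance numerator as $n(n+1)(n^2+n+1)$, checks out. The only difference is cosmetic: you give an explicit proper pairing of corresponding vertices on the two disjoint cycles, whereas the paper simply asserts the class structure and points to a figure (which happens to use a different, but equally valid, assignment on the outer cycle).
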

\begin{proof}~
Note that there are $n+1$ colours, say $c_1,c_2,\ldots,c_n,c_{n+1}$, in an equitable colouring of $DW_n$. As in the case of wheel graph, the colour classes of $c_1,c_2,\ldots,c_n$ are $2$-element sets and the colour class of $c_{n+1}$ is a singleton set in $DW_n$ (see Figure \ref{fig:e-dwl} for illustration). Therefore, the corresponding \textit{p.m.f} of $DW_n$ is given by
$$f(i)=P(X=i)=
\begin{cases}
\frac{2}{2n+1}; & \text{if}\ i=1,2,3,\ldots,n;\\
\frac{1}{2n+1}; & \text{if}\ i=n+1;\\
0; & \text{elsewhere}.
\end{cases}$$
Hence, $\eE(DW_n)=(1+2+3+\ldots+n)\cdot\frac{2}{2n+1}+(n+1)\cdot\frac{1}{2n+1}=\frac{(n+1)^2}{2n+1}$ and $\eV(DW_n)= (1^2+2^2+\ldots+n^2)\cdot\frac{2}{2n+1}+(n+1)^2\cdot\frac{1}{2n+1}-\left(\frac{(n+1)^2}{2n+1}\right)^2 = \frac{n^4+2n^3+2n^2+n}{3(2n+1)^2}$.
\end{proof}

\begin{figure}[h!]
\centering
\begin{tikzpicture}[auto,node distance=1.75cm,
thick,main node/.style={circle,draw,font=\sffamily\Large\bfseries}]
\vertex (v1) at (0:1.75) []{$c_1$};
\vertex (v2) at (320:1.75) []{$c_2$};
\vertex (v3) at (280:1.75) []{$c_3$};
\vertex (v4) at (240:1.75) []{$c_4$};
\vertex (v5) at (200:1.75) []{$c_5$};
\vertex (v6) at (160:1.75) []{$c_6$};
\vertex (v7) at (120:1.75) []{$c_7$};
\vertex (v8) at (80:1.75) []{$c_8$};
\vertex (v9) at (40:1.75) []{$c_9$};
\vertex (v) at (0:0) []{$c_{10}$};
\vertex (u1) at (0:3) []{$c_9$};
\vertex (u2) at (320:3) []{$c_8$};
\vertex (u3) at (280:3) []{$c_7$};
\vertex (u4) at (240:3) []{$c_6$};
\vertex (u5) at (200:3) []{$c_4$};
\vertex (u6) at (160:3) []{$c_5$};
\vertex (u7) at (120:3) []{$c_3$};
\vertex (u8) at (80:3) []{$c_2$};
\vertex (u9) at (40:3) []{$c_1$};
\path 
(v1) edge (v2)
(v2) edge (v3)
(v3) edge (v4)
(v4) edge (v5)
(v5) edge (v6)
(v6) edge (v7)
(v7) edge (v8)
(v8) edge (v9)
(v9) edge (v1)
(u1) edge (u2)
(u2) edge (u3)
(u3) edge (u4)
(u4) edge (u5)
(u5) edge (u6)
(u6) edge (u7)
(u7) edge (u8)
(u8) edge (u9)
(u9) edge (u1)
(v1) edge (v)
(v2) edge (v)
(v3) edge (v)
(v4) edge (v)
(v5) edge (v)
(v6) edge (v)
(v7) edge (v)
(v8) edge (v)
(v9) edge (v)
(u1) edge [bend right] (v)
(u2) edge [bend right] (v)
(u3) edge [bend right] (v)
(u4) edge [bend right] (v)
(u5) edge [bend right] (v)
(u6) edge [bend right] (v)
(u7) edge [bend right] (v)
(u8) edge [bend right] (v)
(u9) edge [bend right] (v)
;
\end{tikzpicture}
\caption{\small Equitable colouring of a double wheel graph}\label{fig:e-dwl}
\end{figure}
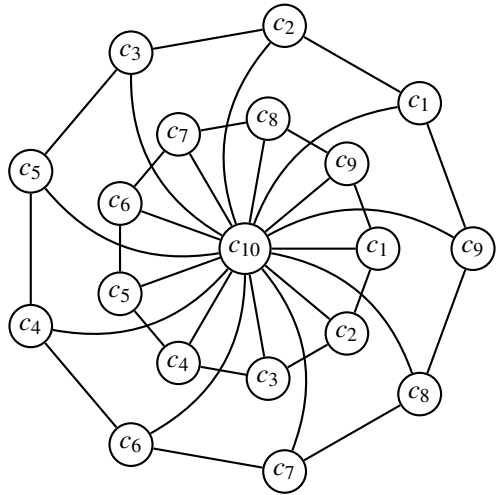 

A \textit{helm graph} $H_n$ is a graph obtained by attaching a pendant edge to every vertex of the rim $C_n$ of a wheel graph $W_n$. The following result provides the equitable colouring parameters of the helm graphs.

\begin{theorem}\label{Thm-2}
For the helm graph $H_n$, we have $$\eE(H_n)=\frac{5n+1}{2n+1}$$ 
and
$$\eV(H_n)=\begin{cases}
\frac{5n^2+7n}{(2n+1)^2}; & \text{if $n$ is even}\\
\frac{5n^2+3n-2}{(2n+1)^2}; & \text{if $n$ is odd}.
\end{cases}$$
\end{theorem}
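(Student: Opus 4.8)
The plan is to mirror the arguments for Theorems~\ref{Thm-1} and~\ref{Thm-1a}: display an explicit equitable colouring of $H_n$, read off the sizes of its colour classes, convert these into the p.m.f.\ of the random variable $X$, and then evaluate the two defining sums $\sum i\,f(i)$ and $\sum i^2 f(i)$. Write $v$ for the central vertex, $v_1,\dots,v_n$ for the rim (a copy of $C_n$) and $u_1,\dots,u_n$ for the pendants, where $u_i$ is adjacent only to $v_i$; thus $|V(H_n)|=2n+1$. The decisive structural feature is that $v$ meets every rim vertex but no pendant, while each $u_i$ has degree one: the pendants are therefore essentially free and can be used to balance the colour classes. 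I claim $H_n$ has an equitable colouring with four colours. Concretely, colour the wheel $W_n=C_n+K_1$ inside $H_n$ properly --- colours $1,2$ alternating on the rim and colour $3$ on $v$ when $n$ is even; colours $1,2,3$ on the odd rim cycle and colour $4$ on $v$ when $n$ is odd --- and then distribute the $n$ pendants (each $u_i$ may take any colour $\ne$ that of $v_i$) so that the four classes are as balanced as possible. Since $2n+1\equiv 1\pmod 4$ for even $n$ and $2n+1\equiv 3\pmod 4$ for odd $n$, this produces class sizes $\tfrac{n}{2}+1,\tfrac{n}{2},\tfrac{n}{2},\tfrac{n}{2}$ in the even case and $\tfrac{n+1}{2},\tfrac{n+1}{2},\tfrac{n+1}{2},\tfrac{n-1}{2}$ in the odd case.

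Ordering the colours in nonincreasing order of class size, as the convention $0\le\theta(c_i)-\theta(c_j)\le1$ for $i<j$ demands, the p.m.f.\ of $X$ is
$$f(i)=\begin{cases}\dfrac{n+2}{2(2n+1)}; & i=1,\\[1mm] \dfrac{n}{2(2n+1)}; & i=2,3,4,\end{cases}\qquad(n\ \text{even}),\qquad f(i)=\begin{cases}\dfrac{n+1}{2(2n+1)}; & i=1,2,3,\\[1mm] \dfrac{n-1}{2(2n+1)}; & i=4,\end{cases}\qquad(n\ \text{odd}).$$

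From here the mean is immediate: $\eE(H_n)=\sum_{i=1}^4 i\,f(i)$ equals $\frac{(n+2)+9n}{2(2n+1)}$ for even $n$ and $\frac{6(n+1)+4(n-1)}{2(2n+1)}$ for odd $n$, both of which collapse to $\frac{10n+2}{2(2n+1)}=\frac{5n+1}{2n+1}$, in agreement with the single-case mean in the statement. The variance then comes from $E[X^2]=\sum_{i=1}^4 i^2 f(i)$, which is $\frac{15n+1}{2n+1}$ for even $n$ and $\frac{15n-1}{2n+1}$ for odd $n$; substituting into $\eV(H_n)=E[X^2]-\big(\eE(H_n)\big)^2$ gives $\frac{(15n+1)(2n+1)-(5n+1)^2}{(2n+1)^2}=\frac{5n^2+7n}{(2n+1)^2}$ for even $n$ and $\frac{(15n-1)(2n+1)-(5n+1)^2}{(2n+1)^2}=\frac{5n^2+3n-2}{(2n+1)^2}$ for odd $n$, which is why the variance, unlike the mean, splits into two cases.

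The main obstacle is not the algebra but justifying the colour-class sizes, i.e.\ showing that four colours do admit an equitable colouring and that the pendants can always be apportioned to meet the stated targets. This is a degree-prescribed assignment in which pendant $u_i$ is barred only from the single colour of $v_i$; each colour $c$ is thus admissible for all but the few pendants whose rim-neighbour carries $c$, so the number $\theta_c$ required minus the count already supplied by $W_n$ never exceeds the available pendants, and a short Hall-type (or greedy) balancing argument secures feasibility. Once these sizes are pinned down, the remainder is the routine evaluation of $\sum i\,f(i)$ and $\sum i^2 f(i)$ recorded above.
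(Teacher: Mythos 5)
Your proposal is correct and takes essentially the same route as the paper: the paper likewise asserts a four-colour equitable colouring of $H_n$ with class sizes $\bigl(\tfrac{n+2}{2},\tfrac{n}{2},\tfrac{n}{2},\tfrac{n}{2}\bigr)$ for even $n$ and $\bigl(\tfrac{n+1}{2},\tfrac{n+1}{2},\tfrac{n+1}{2},\tfrac{n-1}{2}\bigr)$ for odd $n$, writes down the identical p.m.f., and evaluates $\sum i\,f(i)$ and $\sum i^2 f(i)$ exactly as you do (your arithmetic checks out in both cases). The only difference is that you explicitly construct the colouring and sketch a Hall-type argument for distributing the pendants, whereas the paper simply points to an illustrative figure for the class sizes.
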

\begin{proof}~
Note that the equitable colouring of a helm graph $H_n$ contains four colours, say $c_1,c_2,c_3$ and $c_4$. Then, we consider the following cases:

\textit{Case-1:} If $n$ is even, then $\theta(c_1)=\frac{n+2}{2}$ and $\theta(c_2)=\theta(c_3)=\theta(c_4)=\frac{n}{2}$ (see Figure \ref{fig:e-hl} for illustration). Then, the corresponding to the random variable $X$ defined as mentioned above, the \textit{p.m.f} is given by 
$$f(i)=P(X=i)=
\begin{cases}
\frac{n+2}{2(2n+1)}; & \text{for}\ i=1,\\
\frac{n}{2(2n+1)}; & \text{for}\ i=2,3,4\\
0; & \text{elsewhere}.
\end{cases}$$
Then, the corresponding equitable colouring mean is $\eE=1\cdot \frac{n+2}{2(2n+1)} +(2+3+4)\cdot \frac{n}{2(2n+1)}=\frac{5n+1}{2n+1}$ and $\eV= 1^2\cdot \frac{n+2}{4n+2} +(2^2+3^2+4^2)\cdot \frac{n}{4n+2}-\left(\frac{5n+1}{4n+2}\right)^2 =\frac{5n^2+7n}{(2n+1)^2}$.

\textit{Case-2:} If $n$ is odd, then $\theta(c_1)=\theta(c_2)=\theta(c_3)=\frac{n+1}{2}$ and $\theta(c_4)=\frac{n-1}{2}$ (see Figure \ref{fig:e-hl} for illustration). Then, the corresponding to the random variable $X$ defined as mentioned above, the \textit{p.m.f} is given by 
$$f(i)=P(X=i)=
\begin{cases}
\frac{n+1}{2(2n+1)}; & \text{for}\ i=1,2,3\\
\frac{n-1}{2(2n+1)}; & \text{for}\ i=4\\
0; & \text{elsewhere}.
\end{cases}$$
Then, the corresponding equitable colouring mean is $\eE=(1+2+3)\cdot \frac{n+1}{2(2n+1)} +4\cdot \frac{n-1}{2(2n+1)}=\frac{5n+1}{2n+1}$ and $\eV= (1^2+2^2+3^2)\cdot \frac{n+1}{4n+2}+4^2\cdot \frac{n-1}{4n+2}-\left(\frac{5n+1}{4n+2}\right)^2 =\frac{5n^2+3n-2}{(2n+1)^2}$.
\end{proof}

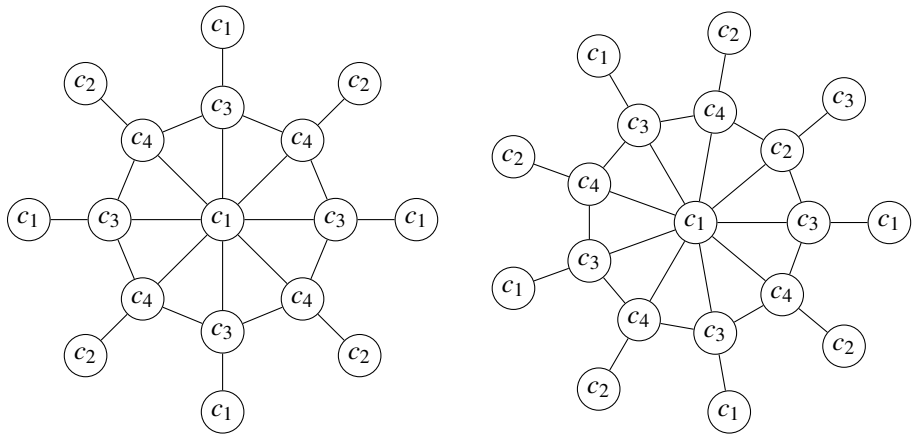
\begin{figure}[h!]
\begin{center}
\begin{tikzpicture}[scale=0.85] 
\vertex (v1) at (0:1.75) []{$c_3$};
\vertex (v2) at (315:1.75) []{$c_4$};
\vertex (v3) at (270:1.75) []{$c_3$};
\vertex (v4) at (225:1.75) []{$c_4$};
\vertex (v5) at (180:1.75) []{$c_3$};
\vertex (v6) at (135:1.75) []{$c_4$};
\vertex (v7) at (90:1.75) []{$c_3$};
\vertex (v8) at (45:1.75) []{$c_4$};
\vertex (v) at (0:0) []{$c_1$};
\vertex (u1) at (0:3) []{$c_1$};
\vertex (u2) at (315:3) []{$c_2$};
\vertex (u3) at (270:3) []{$c_1$};
\vertex (u4) at (225:3) []{$c_2$};
\vertex (u5) at (180:3) []{$c_1$};
\vertex (u6) at (135:3) []{$c_2$};
\vertex (u7) at (90:3) []{$c_1$};
\vertex (u8) at (45:3) []{$c_2$};
\path 
(v1) edge (v2)
(v1) edge (v8)
(v1) edge (v)
(v2) edge (v3)
(v2) edge (v)
(v3) edge (v4)
(v3) edge (v)
(v4) edge (v5)
(v4) edge (v)
(v5) edge (v6)
(v5) edge (v)
(v6) edge (v7)
(v6) edge (v)
(v7) edge (v8)
(v7) edge (v)
(v8) edge (v)
(v1) edge (u1)
(v2) edge (u2)
(v3) edge (u3)
(v4) edge (u4)
(v5) edge (u5)
(v6) edge (u6)
(v7) edge (u7)
(v8) edge (u8)
;
\end{tikzpicture}
\qquad 
\begin{tikzpicture}[scale=0.85] 
\vertex (v1) at (0:1.75) []{$c_3$};
\vertex (v2) at (320:1.75) []{$c_4$};
\vertex (v3) at (280:1.75) []{$c_3$};
\vertex (v4) at (240:1.75) []{$c_4$};
\vertex (v5) at (200:1.75) []{$c_3$};
\vertex (v6) at (160:1.75) []{$c_4$};
\vertex (v7) at (120:1.75) []{$c_3$};
\vertex (v8) at (80:1.75) []{$c_4$};
\vertex (v9) at (40:1.75) []{$c_2$};
\vertex (v) at (0:0) []{$c_1$};
\vertex (u1) at (0:3) []{$c_1$};
\vertex (u2) at (320:3) []{$c_2$};
\vertex (u3) at (280:3) []{$c_1$};
\vertex (u4) at (240:3) []{$c_2$};
\vertex (u5) at (200:3) []{$c_1$};
\vertex (u6) at (160:3) []{$c_2$};
\vertex (u7) at (120:3) []{$c_1$};
\vertex (u8) at (80:3) []{$c_2$};
\vertex (u9) at (40:3) []{$c_3$};
\path 
(v1) edge (v2)
(v2) edge (v3)
(v3) edge (v4)
(v4) edge (v5)
(v5) edge (v6)
(v6) edge (v7)
(v7) edge (v8)
(v8) edge (v9)
(v9) edge (v1)
(v1) edge (v)
(v2) edge (v)
(v3) edge (v)
(v4) edge (v)
(v5) edge (v)
(v6) edge (v)
(v7) edge (v)
(v8) edge (v)
(v9) edge (v)
(v1) edge (u1)
(v2) edge (u2)
(v3) edge (u3)
(v4) edge (u4)
(v5) edge (u5)
(v6) edge (u6)
(v7) edge (u7)
(v8) edge (u8)
(v9) edge (u9)
;
\end{tikzpicture}
\end{center}
\caption{\small Equitable colouring of helm graphs}\label{fig:e-hl}
\end{figure}

A \textit{closed helm graph} $CH_n$ is a graph obtained from the helm graph $H_n$, by joining a pendant vertex $v_i$ to the pendant vertex $v_{i+1}$, where $1\le i\le n$ and $v_{n+i}=v_i$. That is, the pendant vertices in $H_n$ induce a cycle in $CH_n$. Then, we have 

\begin{theorem}\label{Thm-3}
For the closed helm graph $CH_n$, we have $$\eE(CH_n)=\frac{5n+1}{2n+1}$$ 
and
$$\eV(CH_n)=
\begin{cases}
\frac{5n^2+7n}{(2n+1)^2}; & \text{if $n$ is even}\\
\frac{5n^2+3n-2}{(2n+1)^2}; & \text{if $n$ is odd}.
\end{cases}$$
\end{theorem}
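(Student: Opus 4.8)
The plan is to reduce the entire computation to the helm graph $H_n$ already treated in Theorem~\ref{Thm-2}. The guiding observation is that $CH_n$ and $H_n$ share the same vertex set---one centre $v$, the $n$ rim vertices $v_1,\dots,v_n$, and the $n$ former pendant vertices $u_1,\dots,u_n$, for a total of $2n+1$ vertices---and that $CH_n$ is obtained from $H_n$ merely by adding the $n$ outer edges $u_iu_{i+1}$ (indices mod $n$). If I can show that $CH_n$ admits an equitable colouring using exactly four colours whose colour-class cardinalities coincide with those recorded in the two cases of Theorem~\ref{Thm-2}, then the probability mass function $f(i)$ of $CH_n$ is literally the same function as that of $H_n$, and the expressions for $\eE(CH_n)$ and $\eV(CH_n)$ follow from the identical sums already evaluated there.

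First I would exhibit such a colouring explicitly, taking the four-colouring of $H_n$ displayed in Figure~\ref{fig:e-hl} verbatim: colour $v$ with $c_1$, colour the rim cycle alternately with $c_3,c_4$, and colour the outer vertices alternately with $c_1,c_2$, reserving the single parity adjustment (one rim vertex recoloured $c_2$, one outer vertex recoloured $c_3$) for the odd case exactly as in that figure. Since this assignment is already proper on $H_n$ and already gives consecutive outer vertices distinct colours---automatically when $n$ is even, and after the one local change when $n$ is odd---the newly added edges $u_iu_{i+1}$ join differently coloured vertices, so the colouring remains proper on $CH_n$. As no vertex has changed colour, the class cardinalities are unchanged: $\theta(c_1)=\frac{n+2}{2}$ and $\theta(c_2)=\theta(c_3)=\theta(c_4)=\frac{n}{2}$ when $n$ is even, and $\theta(c_1)=\theta(c_2)=\theta(c_3)=\frac{n+1}{2}$, $\theta(c_4)=\frac{n-1}{2}$ when $n$ is odd. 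These sizes are as balanced as four classes on $2n+1$ vertices can be, so the colouring is equitable, and under the ordering convention $0\le\theta(c_i)-\theta(c_j)\le 1$ for $i<j$ it yields exactly the same $f(i)$ as for $H_n$.

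The substantive step---and the one I expect to be the main obstacle---is to confirm that four colours are genuinely necessary, i.e.\ that $\chi_e(CH_n)=4$, so that the four-colour $f(i)$ above is indeed the distribution from which the parameters are computed. For $n$ odd this is immediate: $CH_n$ contains the odd wheel $W_n$ as a subgraph and $\chi(W_n)=4$, whence $\chi_e(CH_n)\ge\chi(CH_n)\ge 4$. For $n$ even the ordinary chromatic number is only $3$, so here I would rule out an equitable $3$-colouring by a counting argument: the centre $v$ is adjacent to every rim vertex, forcing the rim to use only two colours and forcing the third colour class (the one containing $v$) to be completed entirely from the outer cycle. Equitability makes every class have size about $\frac{2n+1}{3}$, so that class would need roughly $\frac{2n-2}{3}$ outer vertices, which must form an independent set in the outer $n$-cycle. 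Since $\frac{2n-2}{3}>\lfloor n/2\rfloor$ for $n\ge 5$ (with the case $n=4$ checked directly), no such independent set exists and an equitable $3$-colouring is impossible; hence $\chi_e(CH_n)=4$. Once $\chi_e(CH_n)=4$ is secured, the match of class cardinalities with $H_n$ gives $\eE(CH_n)=\eE(H_n)$ and $\eV(CH_n)=\eV(H_n)$, which are precisely the claimed expressions.
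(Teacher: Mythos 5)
Your overall route coincides with the paper's: the paper proves Theorem~\ref{Thm-3} by a one-line reduction to Theorem~\ref{Thm-2}, observing (via Figure~\ref{fig:e-chl}) that $CH_n$ has the same $2n+1$ vertices, the same four colours and the same colour-class cardinalities as $H_n$, hence the same p.m.f.\ and the same mean and variance. Your transfer of the $H_n$ colouring to $CH_n$, with the check that the added edges $u_iu_{i+1}$ join differently coloured vertices, is a correct and more careful version of exactly that step. Your additional verification that $\chi_e(CH_n)=4$ is rigour the paper omits entirely (it never argues minimality of the four-colouring), and the odd case of it, via the odd wheel $W_n\subseteq CH_n$, is fine. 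But the even case of that extra step, as you wrote it, fails.

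The flaw is an integrality slip in the pigeonhole. The class containing $v$ in a hypothetical equitable $3$-colouring need only have the minimum class size $\lfloor (2n+1)/3\rfloor$, so it needs only $\lfloor (2n+1)/3\rfloor-1$ outer vertices, an integer that can fall strictly below your unrounded quantity $\frac{2n-2}{3}$. For $n=6$ the minimum class size is $\lfloor 13/3\rfloor=4$, so the $v$-class needs only $3$ independent outer vertices, and $C_6$ contains $\{u_1,u_3,u_5\}$; for $n=8$ it needs only $4$, and $C_8$ contains $\{u_1,u_3,u_5,u_7\}$. So your assertion that ``no such independent set exists'' for $n\ge 5$ is false at $n=6$ and $n=8$, and the counting yields no contradiction there. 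The conclusion $\chi_e(CH_n)=4$ is still true, but it requires the structural argument that your count skips: with colours $\{1,2,3\}$ and $v$ coloured $3$, the rim (an even cycle dominated by $v$) is forced to alternate colours $1,2$, and every outer vertex not in the colour-$3$ set $S$ is forced to the rim colour opposite its neighbour $v_i$; writing $s_o,s_e$ for the number of odd- and even-indexed vertices of $S$, the class sizes are exactly $n-s_e$, $n-s_o$ and $1+|S|$, and equitability forces $3|S|\ge 2n-4$ together with $|S|\le n/2$. For even $n\ge 10$ these are incompatible (your regime), while for $n\in\{4,6,8\}$ they force $|S|=n/2$, hence $S$ is one of the two maximum independent sets of $C_n$, giving $|s_o-s_e|=n/2\ge 2$ and class sizes $\bigl(n,\tfrac{n}{2},\tfrac{n}{2}+1\bigr)$ --- not equitable. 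This uniform argument repairs, and in fact supersedes, both your counting step and your separate $n=4$ check.
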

\begin{proof}~
The proof follows exactly as mentioned in the proof Theorem \ref{Thm-2} (see Figure \ref{fig:e-chl} for illustration).
\end{proof}

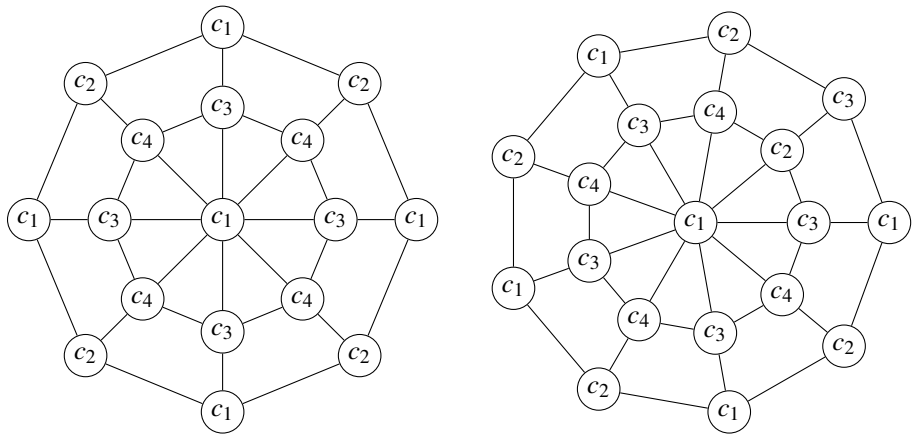
\begin{figure}[h!]
\begin{center}
\begin{tikzpicture}[scale=0.85] 
\vertex (v1) at (0:1.75) []{$c_3$};
\vertex (v2) at (315:1.75) []{$c_4$};
\vertex (v3) at (270:1.75) []{$c_3$};
\vertex (v4) at (225:1.75) []{$c_4$};
\vertex (v5) at (180:1.75) []{$c_3$};
\vertex (v6) at (135:1.75) []{$c_4$};
\vertex (v7) at (90:1.75) []{$c_3$};
\vertex (v8) at (45:1.75) []{$c_4$};
\vertex (v) at (0:0) []{$c_1$};
\vertex (u1) at (0:3) []{$c_1$};
\vertex (u2) at (315:3) []{$c_2$};
\vertex (u3) at (270:3) []{$c_1$};
\vertex (u4) at (225:3) []{$c_2$};
\vertex (u5) at (180:3) []{$c_1$};
\vertex (u6) at (135:3) []{$c_2$};
\vertex (u7) at (90:3) []{$c_1$};
\vertex (u8) at (45:3) []{$c_2$};
\path 
(v1) edge (v2)
(v1) edge (v8)
(v1) edge (v)
(v2) edge (v3)
(v2) edge (v)
(v3) edge (v4)
(v3) edge (v)
(v4) edge (v5)
(v4) edge (v)
(v5) edge (v6)
(v5) edge (v)
(v6) edge (v7)
(v6) edge (v)
(v7) edge (v8)
(v7) edge (v)
(v8) edge (v)
(v1) edge (u1)
(v2) edge (u2)
(v3) edge (u3)
(v4) edge (u4)
(v5) edge (u5)
(v6) edge (u6)
(v7) edge (u7)
(v8) edge (u8)
(u1) edge (u2)
(u2) edge (u3)
(u3) edge (u4)
(u4) edge (u5)
(u5) edge (u6)
(u6) edge (u7)
(u7) edge (u8)
(u8) edge (u1)
;
\end{tikzpicture}
\qquad 
\begin{tikzpicture}[scale=0.85] 
\vertex (v1) at (0:1.75) []{$c_3$};
\vertex (v2) at (320:1.75) []{$c_4$};
\vertex (v3) at (280:1.75) []{$c_3$};
\vertex (v4) at (240:1.75) []{$c_4$};
\vertex (v5) at (200:1.75) []{$c_3$};
\vertex (v6) at (160:1.75) []{$c_4$};
\vertex (v7) at (120:1.75) []{$c_3$};
\vertex (v8) at (80:1.75) []{$c_4$};
\vertex (v9) at (40:1.75) []{$c_2$};
\vertex (v) at (0:0) []{$c_1$};
\vertex (u1) at (0:3) []{$c_1$};
\vertex (u2) at (320:3) []{$c_2$};
\vertex (u3) at (280:3) []{$c_1$};
\vertex (u4) at (240:3) []{$c_2$};
\vertex (u5) at (200:3) []{$c_1$};
\vertex (u6) at (160:3) []{$c_2$};
\vertex (u7) at (120:3) []{$c_1$};
\vertex (u8) at (80:3) []{$c_2$};
\vertex (u9) at (40:3) []{$c_3$};
\path 
(v1) edge (v2)
(v2) edge (v3)
(v3) edge (v4)
(v4) edge (v5)
(v5) edge (v6)
(v6) edge (v7)
(v7) edge (v8)
(v8) edge (v9)
(v9) edge (v1)
(v1) edge (v)
(v2) edge (v)
(v3) edge (v)
(v4) edge (v)
(v5) edge (v)
(v6) edge (v)
(v7) edge (v)
(v8) edge (v)
(v9) edge (v)
(v1) edge (u1)
(v2) edge (u2)
(v3) edge (u3)
(v4) edge (u4)
(v5) edge (u5)
(v6) edge (u6)
(v7) edge (u7)
(v8) edge (u8)
(v9) edge (u9)
(u1) edge (u2)
(u2) edge (u3)
(u3) edge (u4)
(u4) edge (u5)
(u5) edge (u6)
(u6) edge (u7)
(u7) edge (u8)
(u8) edge (u9)
(u9) edge (u1)
;
\end{tikzpicture}
\end{center}
\caption{\small Equitable colouring of closed helm graphs}\label{fig:e-chl}
\end{figure}

A \textit{flower graph} $F_n$ is a graph which is obtained by joining the pendant vertices of a helm graph $H_n$ to its central vertex. The following theorem discusses the equitable colouring parameters of the flower graph $F_n$.

\begin{theorem}\label{Thm-4}
For a flower graph $F_n$, we have $$\eE(F_n)=\frac{(n+1)^2}{2n+1}$$
and $$\eV(F_n)=\frac{n^4+2n^3+2n^2+n}{3(2n+1)^2}.$$
\end{theorem}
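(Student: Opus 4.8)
The plan is to reduce the computation to that of the double wheel graph in Theorem~\ref{Thm-1a}, exploiting that $F_n$ and $DW_n$ both have $2n+1$ vertices and a dominating central vertex. First I would observe that in the flower graph $F_n$ the central vertex $v$ is adjacent to every rim vertex $v_1,\dots,v_n$ and to every pendant vertex $u_1,\dots,u_n$; hence no other vertex may receive its colour, and the colour class of $v$ is a singleton in \emph{every} proper colouring. Consequently, in any equitable colouring every other colour class must have cardinality $1$ or $2$, so that at least $\lceil (2n+1)/2\rceil = n+1$ colours are needed, giving the lower bound $\chi_e(F_n)\ge n+1$.

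Next I would exhibit an equitable colouring attaining this bound. Assign $v$ the colour $c_{n+1}$ and set $v_i\mapsto c_i$ for $i=1,\dots,n$; since these colours are pairwise distinct, the rim cycle is automatically properly coloured, irrespective of the parity of $n$. It then remains to colour the pendant vertices using $c_1,\dots,c_n$ so that each colour is used exactly once more. Because $u_i$ is adjacent only to $v_i$ (apart from $v$), it suffices to set $u_i\mapsto c_{\pi(i)}$ for any derangement $\pi$ of $\{1,\dots,n\}$, for instance the cyclic shift $\pi(i)=i+1$ for $i<n$ and $\pi(n)=1$. This produces $n$ colour classes of size $2$ together with the singleton $\{v\}$, so $\chi_e(F_n)=n+1$; moreover, at $n+1$ colours the class structure is forced, namely exactly one singleton (necessarily $\{v\}$) and $n$ pairs.

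With the colours ordered by the paper's convention $0\le \theta(c_i)-\theta(c_j)\le 1$ for $i<j$, the pairs become $c_1,\dots,c_n$ and the singleton becomes $c_{n+1}$, yielding the probability mass function
$$f(i)=\begin{cases}\frac{2}{2n+1}; & i=1,2,\dots,n\\ \frac{1}{2n+1}; & i=n+1\\ 0; & \text{elsewhere},\end{cases}$$
which coincides verbatim with that of the double wheel $DW_n$ in Theorem~\ref{Thm-1a}. Since $\eE$ and $\eV$ depend only on $f$, the two graphs share the same parameters, and the asserted formulas for $\eE(F_n)$ and $\eV(F_n)$ follow immediately from the computation already performed for $DW_n$.

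The only genuinely non-routine step is the existence claim in the second paragraph --- that the $2n$ non-central vertices split into $n$ proper colour classes of size exactly $2$. The derangement construction settles this uniformly in $n$, and it also explains why, in contrast to the wheel and helm graphs, no case distinction on the parity of $n$ is required here.
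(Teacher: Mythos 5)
Your proposal is correct and follows essentially the same route as the paper: both arguments use the dominating central vertex to force a singleton colour class, deduce that all classes have size $1$ or $2$ so that $n+1$ colours arise with $n$ pairs and one singleton, and read off the same p.m.f.\ $f(i)$ — the paper then recomputes the sums directly, while you simply cite the identical computation from Theorem~\ref{Thm-1a} for $DW_n$, a cosmetic difference. Your explicit derangement construction of the colouring (which the paper only illustrates by a figure) is a welcome extra bit of rigour, and your observation that no parity split is needed matches the paper.
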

\begin{proof}~
As in the case of wheel graph, the central vertex $v$ of $F_n$ is adjacent to all other vertices of $F_n$ and hence no other vertex of $F_n$ can have the same colour of $v$. Hence, the colour class containing $v$ is a singleton set and every other colour class, with respect to an equitable colouring of $F_n$ must be a singleton set or a $2$-element set. Then, $n+1$ colours are required in an equitable colouring of $F_n$. Let $\C=\{c_1,c_2,\ldots,c_n,c_{n+1}\}$ be an equitable colouring of $F_n$. Note that the colour classes of $c_1,c_2,\ldots,c_n$ are $2$-element sets, while the colour class of $c_{n+1}$, which consists of the central vertex, is a singleton set (see Figure \ref{fig:e-fl} for illustration). Then, the corresponding \textit{p.m.f} of $G$ is 
$$f(i)=P(X=i)=
\begin{cases}
\frac{2}{2n+1}; & \text{for}\ i=1,2,\ldots,n,\\
\frac{1}{2n+1}; & \text{for}\ i=n+1\\
0; & \text{elsewhere}.
\end{cases}$$
Hence, the corresponding equitable colouring mean of the flower graph $F_n$ is given by $\eE(F_n)=(1+2+3+\ldots+n)\cdot \frac{2}{2n+1} +(n+1)\cdot \frac{1}{2n+1}=\frac{(n+1)^2}{2n+1}$ and the corresponding equitable colouring variance of $H_n$ is $\eV= (1^2+2^2+3^2+\ldots+n^2)\cdot \frac{2}{2n+1} +(n+1)^2\cdot \frac{1}{2n+1}-\left(\frac{(n+1)^2}{2n+1}\right)^2=\frac{n^4+2n^3+2n^2+n}{3(2n+1)^2}$.
\end{proof}

\begin{figure}[h!]
\begin{center}
\begin{tikzpicture}[scale=0.85] 
\vertex (v1) at (0:1.75) []{$c_1$};
\vertex (v2) at (315:1.75) []{$c_2$};
\vertex (v3) at (270:1.75) []{$c_3$};
\vertex (v4) at (225:1.75) []{$c_4$};
\vertex (v5) at (180:1.75) []{$c_5$};
\vertex (v6) at (135:1.75) []{$c_6$};
\vertex (v7) at (90:1.75) []{$c_7$};
\vertex (v8) at (45:1.75) []{$c_8$};
\vertex (v) at (0:0) []{$c_9$};
\vertex (u1) at (0:3) []{$c_8$};
\vertex (u2) at (315:3) []{$c_7$};
\vertex (u3) at (270:3) []{$c_6$};
\vertex (u4) at (225:3) []{$c_5$};
\vertex (u5) at (180:3) []{$c_4$};
\vertex (u6) at (135:3) []{$c_3$};
\vertex (u7) at (90:3) []{$c_2$};
\vertex (u8) at (45:3) []{$c_1$};
\path 
(v1) edge (v2)
(v1) edge (v8)
(v1) edge (v)
(v2) edge (v3)
(v2) edge (v)
(v3) edge (v4)
(v3) edge (v)
(v4) edge (v5)
(v4) edge (v)
(v5) edge (v6)
(v5) edge (v)
(v6) edge (v7)
(v6) edge (v)
(v7) edge (v8)
(v7) edge (v)
(v8) edge (v)
(v1) edge (u1)
(v2) edge (u2)
(v3) edge (u3)
(v4) edge (u4)
(v5) edge (u5)
(v6) edge (u6)
(v7) edge (u7)
(v8) edge (u8)
(u1) edge [bend right] (v)
(u2) edge [bend right] (v)
(u3) edge [bend right] (v)
(u4) edge [bend right] (v)
(u5) edge [bend right] (v)
(u6) edge [bend right] (v)
(u7) edge [bend right] (v)
(u8) edge [bend right] (v)
;
\end{tikzpicture}
\qquad 
\begin{tikzpicture}[scale=0.85] 
\vertex (v1) at (0:1.75) []{$c_1$};
\vertex (v2) at (320:1.75) []{$c_2$};
\vertex (v3) at (280:1.75) []{$c_3$};
\vertex (v4) at (240:1.75) []{$c_4$};
\vertex (v5) at (200:1.75) []{$c_5$};
\vertex (v6) at (160:1.75) []{$c_6$};
\vertex (v7) at (120:1.75) []{$c_7$};
\vertex (v8) at (80:1.75) []{$c_8$};
\vertex (v9) at (40:1.75) []{$c_9$};
\vertex (v) at (0:0) []{$c_{10}$};
\vertex (u1) at (0:3) []{$c_9$};
\vertex (u2) at (320:3) []{$c_8$};
\vertex (u3) at (280:3) []{$c_7$};
\vertex (u4) at (240:3) []{$c_6$};
\vertex (u5) at (200:3) []{$c_4$};
\vertex (u6) at (160:3) []{$c_5$};
\vertex (u7) at (120:3) []{$c_3$};
\vertex (u8) at (80:3) []{$c_2$};
\vertex (u9) at (40:3) []{$c_1$};
\path 
(v1) edge (v2)
(v2) edge (v3)
(v3) edge (v4)
(v4) edge (v5)
(v5) edge (v6)
(v6) edge (v7)
(v7) edge (v8)
(v8) edge (v9)
(v9) edge (v1)
(v1) edge (v)
(v2) edge (v)
(v3) edge (v)
(v4) edge (v)
(v5) edge (v)
(v6) edge (v)
(v7) edge (v)
(v8) edge (v)
(v9) edge (v)
(v1) edge (u1)
(v2) edge (u2)
(v3) edge (u3)
(v4) edge (u4)
(v5) edge (u5)
(v6) edge (u6)
(v7) edge (u7)
(v8) edge (u8)
(v9) edge (u9)
(u1) edge [bend right] (v)
(u2) edge [bend right] (v)
(u3) edge [bend right] (v)
(u4) edge [bend right] (v)
(u5) edge [bend right] (v)
(u6) edge [bend right] (v)
(u7) edge [bend right] (v)
(u8) edge [bend right] (v)
(u9) edge [bend right] (v)
;
\end{tikzpicture}
\end{center}
\caption{\small Equitable colouring of flower graphs}\label{fig:e-fl}
\end{figure}
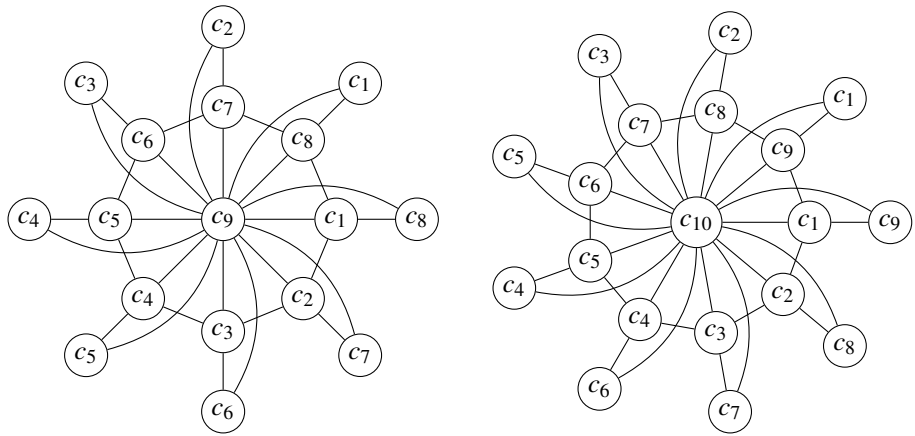

A \textit{sunflower graph} $SF_n$ is a graph obtained by replacing each edge of the rim of a wheel graph $W_n$ by a triangle such that two triangles share a common vertex if and only if the corresponding edges in $W_n$ are adjacent in $W_n$. The following theorem determines the equitable colouring parameters of the sunflower graph $SF_n$.

\begin{theorem}\label{Thm-5}
For the sunflower graph $SF_n$, we have $$\eE(SF_n)=\frac{5n+1}{2n+1}$$ 
and
$$\eV(SF_n)=\begin{cases}
\frac{5n^2+7n}{(2n+1)^2}; & \text{if $n$ is even}\\
\frac{5n^2+3n-2}{(2n+1)^2}; & \text{if $n$ is odd}.
\end{cases}$$
\end{theorem}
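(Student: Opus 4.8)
The plan is to reduce the whole computation to that of Theorem~\ref{Thm-2}: I will exhibit an equitable $4$-colouring of $SF_n$ whose colour classes have \emph{exactly} the same cardinalities as those produced for the helm graph $H_n$. Once the colour-class sizes agree, the probability mass function $f(i)$ of $SF_n$ coincides with that of $H_n$, and hence $\eE(SF_n)$ and $\eV(SF_n)$ are given by the very same expressions already evaluated in Theorem~\ref{Thm-2}; no new arithmetic need be carried out.

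First I would fix notation: write $v$ for the central vertex, $v_1,v_2,\ldots,v_n$ for the rim cycle, and $u_1,u_2,\ldots,u_n$ for the petal vertices, where each $u_i$ is adjacent to the consecutive rim vertices $v_i$ and $v_{i+1}$ (indices modulo $n$), so that $|V(SF_n)|=2n+1$. Since $v$ is adjacent to every rim vertex, its colour differs from the colour of every $v_i$, while each triangle $\{v_i,v_{i+1},u_i\}$ forces three distinct colours locally. I would then rule out an equitable $3$-colouring, so as to pin down $\chi_e(SF_n)=4$: the rim must avoid the colour of $v$, hence for even $n$ the even rim cycle is forced onto two colours and every petal is then forced onto the third, inflating one class to size $n+1$; for odd $n$ the odd rim cycle together with the universal vertex $v$ already needs four colours even for a proper colouring. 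This matches the colour count used for the helm.

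Next I would give the colouring explicitly in the two parity cases. For even $n$: set $v=c_1$, colour the rim by alternating $c_3,c_4$, and colour the petals by alternating $c_1,c_2$; since each petal sees one $c_3$ and one $c_4$ (and the petals, being non-adjacent to $v$, may reuse $c_1$) this is proper, and the classes have sizes $\tfrac{n+2}{2},\tfrac{n}{2},\tfrac{n}{2},\tfrac{n}{2}$, i.e.\ the even-case distribution of Theorem~\ref{Thm-2}. For odd $n$: colour $v_1,\ldots,v_{n-1}$ by alternating $c_3,c_4$, set $v_n=c_2$ to close the odd cycle properly, and put $v=c_1$; then distribute the petals so that $\tfrac{n-1}{2}$ receive $c_1$, $\tfrac{n-1}{2}$ receive $c_2$, and a single petal at the ``seam'' receives $c_3$, yielding classes of sizes $\tfrac{n+1}{2},\tfrac{n+1}{2},\tfrac{n+1}{2},\tfrac{n-1}{2}$, the odd-case distribution of Theorem~\ref{Thm-2}. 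Reading off the resulting $f(i)$ and substituting into the definitions of $\eE$ and $\eV$ then reproduces the claimed formulas verbatim.

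I expect the main obstacle to be the odd case of the colouring construction. Unlike the helm, whose pendant vertices have degree one and can be recoloured almost freely, each petal of $SF_n$ has degree two and must differ from both of its rim neighbours. Near the seam $v_n=c_2$ the incident petals $u_{n-1}$ and $u_n$ see the atypical colour pairs $\{c_4,c_2\}$ and $\{c_2,c_3\}$, so one of them is forced to take $c_3$ rather than $c_1$ or $c_2$. The delicate point is to verify that, despite this forced deviation, the petal colours can still be assigned to realise \emph{precisely} the target sizes $\tfrac{n+1}{2},\tfrac{n+1}{2},\tfrac{n+1}{2},\tfrac{n-1}{2}$ while keeping the colouring proper; a short counting check on the remaining $n-2$ freely colourable petals settles this, after which equitability (classes differing by at most one) is immediate and the reduction to Theorem~\ref{Thm-2} completes the proof.
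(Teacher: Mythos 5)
Your proposal is correct and takes essentially the same route as the paper: the paper's proof of Theorem~\ref{Thm-5} consists of the single remark that the argument is that of Theorem~\ref{Thm-2} together with the colouring displayed in Figure~\ref{fig:e-sfl}, i.e.\ precisely your reduction via an equitable $4$-colouring of $SF_n$ whose class sizes are $\tfrac{n+2}{2},\tfrac{n}{2},\tfrac{n}{2},\tfrac{n}{2}$ for even $n$ and $\tfrac{n+1}{2},\tfrac{n+1}{2},\tfrac{n+1}{2},\tfrac{n-1}{2}$ for odd $n$. You in fact supply details the paper leaves implicit in the figure --- the lower bound $\chi_e(SF_n)=4$ and the seam adjustment for odd $n$ (assigning $c_3$ to the petal seeing $\{c_4,c_2\}$ and rebalancing the remaining $n-2$ free petals) --- and your counting there checks out.
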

\begin{proof}~
The proof is similar to that of Theorem \ref{Thm-2} (see Figure \ref{fig:e-sfl} for illustration). 
\end{proof}

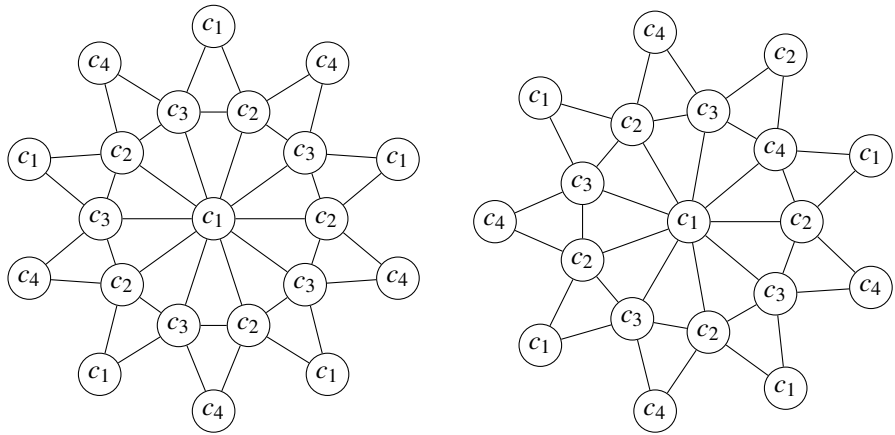
\begin{figure}[h!]
\begin{center}
\begin{tikzpicture}[scale=0.85] 
\vertex (v1) at (0:1.75) []{$c_2$};
\vertex (v2) at (324:1.75) []{$c_3$};
\vertex (v3) at (288:1.75) []{$c_2$};
\vertex (v4) at (252:1.75) []{$c_3$};
\vertex (v5) at (216:1.75) []{$c_2$};
\vertex (v6) at (180:1.75) []{$c_3$};
\vertex (v7) at (144:1.75) []{$c_2$};
\vertex (v8) at (108:1.75) []{$c_3$};
\vertex (v9) at (72:1.75) []{$c_2$};
\vertex (v10) at (36:1.75) []{$c_3$};
\vertex (v) at (0:0) []{$c_{1}$};
\vertex (u1) at (18:3) []{$c_1$};
\vertex (u2) at (342:3) []{$c_4$};
\vertex (u3) at (306:3) []{$c_1$};
\vertex (u4) at (270:3) []{$c_4$};
\vertex (u5) at (234:3) []{$c_1$};
\vertex (u6) at (198:3) []{$c_4$};
\vertex (u7) at (162:3) []{$c_1$};
\vertex (u8) at (126:3) []{$c_4$};
\vertex (u9) at (90:3) []{$c_1$};
\vertex (u10) at (54:3) []{$c_4$};
\path 
(v1) edge (v2)
(v2) edge (v3)
(v3) edge (v4)
(v4) edge (v5)
(v5) edge (v6)
(v6) edge (v7)
(v7) edge (v8)
(v8) edge (v9)
(v9) edge (v10)
(v10) edge (v1)
(v1) edge (v)
(v2) edge (v)
(v3) edge (v)
(v4) edge (v)
(v5) edge (v)
(v6) edge (v)
(v7) edge (v)
(v8) edge (v)
(v9) edge (v)
(v10) edge (v)
(u1) edge (v1)
(u1) edge (v10)
(u2) edge (v1)
(u2) edge (v2)
(u3) edge (v2)
(u3) edge (v3)
(u4) edge (v3)
(u4) edge (v4)
(u5) edge (v4)
(u5) edge (v5)
(u6) edge (v5)
(u6) edge (v6)
(u7) edge (v6)
(u7) edge (v7)
(u8) edge (v7)
(u8) edge (v8)
(u9) edge (v8)
(u9) edge (v9)
(u10) edge (v9)
(u10) edge (v10)
;
\end{tikzpicture}
\qquad 
\begin{tikzpicture}[scale=0.85] 
\vertex (v1) at (0:1.75) []{$c_2$};
\vertex (v2) at (320:1.75) []{$c_3$};
\vertex (v3) at (280:1.75) []{$c_2$};
\vertex (v4) at (240:1.75) []{$c_3$};
\vertex (v5) at (200:1.75) []{$c_2$};
\vertex (v6) at (160:1.75) []{$c_3$};
\vertex (v7) at (120:1.75) []{$c_2$};
\vertex (v8) at (80:1.75) []{$c_3$};
\vertex (v9) at (40:1.75) []{$c_4$};
\vertex (v) at (0:0) []{$c_{1}$};
\vertex (u1) at (20:3) []{$c_1$};
\vertex (u2) at (340:3) []{$c_4$};
\vertex (u3) at (300:3) []{$c_1$};
\vertex (u4) at (260:3) []{$c_4$};
\vertex (u5) at (220:3) []{$c_1$};
\vertex (u6) at (180:3) []{$c_4$};
\vertex (u7) at (140:3) []{$c_1$};
\vertex (u8) at (100:3) []{$c_4$};
\vertex (u9) at (60:3) []{$c_2$};
\path 
(v1) edge (v2)
(v2) edge (v3)
(v3) edge (v4)
(v4) edge (v5)
(v5) edge (v6)
(v6) edge (v7)
(v7) edge (v8)
(v8) edge (v9)
(v9) edge (v1)
(v1) edge (v)
(v2) edge (v)
(v3) edge (v)
(v4) edge (v)
(v5) edge (v)
(v6) edge (v)
(v7) edge (v)
(v8) edge (v)
(v9) edge (v)
(u1) edge (v1)
(u1) edge (v9)
(u2) edge (v1)
(u2) edge (v2)
(u3) edge (v2)
(u3) edge (v3)
(u4) edge (v3)
(u4) edge (v4)
(u5) edge (v4)
(u5) edge (v5)
(u6) edge (v5)
(u6) edge (v6)
(u7) edge (v6)
(u7) edge (v7)
(u8) edge (v7)
(u8) edge (v8)
(u9) edge (v8)
(u9) edge (v9)
;
\end{tikzpicture}
\end{center}
\caption{\small Equitable colouring of a sunflower graphs}\label{fig:e-sfl}
\end{figure} 

A \textit{closed sunflower graph} $CSF_n$ is the graph obtained by joining the independent vertices of a sunflower graph $SF_n$, which are not adjacent to its central vertex so that these vertices induces a cycle on $n$ vertices. The following result provides the equitable colouring parameters of a closed sunflower graph.

 \begin{theorem}\label{Thm-6}
For the sunflower graph $SF_n$, we have $$\eE(CSF_n)=\frac{5n+1}{2n+1}$$ 
and
$$\eV(CSF_n)=\begin{cases}
\frac{5n^2+7n}{(2n+1)^2}; & \text{if $n$ is even}\\
\frac{5n^2+3n-2}{(2n+1)^2}; & \text{if $n$ is odd}.
\end{cases}$$
 \end{theorem}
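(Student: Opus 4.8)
The plan is to show that $CSF_n$ carries an equitable $4$-colouring whose colour-class sizes coincide exactly with those used for the helm graph $H_n$ in Theorem \ref{Thm-2}, so that the p.m.f.\ of $CSF_n$ is term-for-term identical to that of $H_n$ and the mean and variance drop out unchanged. Write $v$ for the central vertex, $v_1,v_2,\ldots,v_n$ for the rim vertices (each adjacent to $v$ and forming the inner cycle), and $u_1,u_2,\ldots,u_n$ for the outer vertices, where $u_i$ is adjacent to $v_i$ and $v_{i+1}$ and, in the closed graph, additionally to $u_{i-1}$ and $u_{i+1}$; hence $|V(CSF_n)|=2n+1$, the same order as $H_n$ and $SF_n$.

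First I would establish that four colours are necessary. Because $v$ is adjacent to every rim vertex, the class $\{v\}$ is a singleton receiving a colour, say $c_1$, that no rim vertex may reuse, so the inner cycle must be coloured from the remaining colours. With only three colours available the rim $C_n$ would have to be properly $2$-coloured, which is impossible for odd $n$; and for even $n$ the forced alternation on the rim leaves every outer vertex $u_i$ (adjacent to both a $c_2$- and a $c_3$-vertex) no choice but $c_1$, violating properness along the outer cycle. Thus $\chi_e(CSF_n)=4$ in every case.

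Next I would write down the colouring and read off the class sizes. Colour $v$ by $c_1$ and alternate $c_2,c_3$ along the rim, using a single third rim colour at one vertex when $n$ is odd, precisely as drawn for $SF_n$ in Figure \ref{fig:e-sfl}. Each outer vertex $u_i$ must avoid the two rim colours on $v_i,v_{i+1}$, which for a typical index leaves it the pair $\{c_1,c_4\}$; the key point is that the alternating outer assignment already displayed for the \emph{open} sunflower is itself a proper colouring of the outer \emph{cycle}, so nothing has to be altered in passing from $SF_n$ to $CSF_n$. Counting each colour and then relabelling to meet the convention $\theta(c_1)\ge\theta(c_2)\ge\theta(c_3)\ge\theta(c_4)$, I obtain $\theta(c_1)=\tfrac{n+2}{2}$ and $\theta(c_2)=\theta(c_3)=\theta(c_4)=\tfrac{n}{2}$ for even $n$, and $\theta(c_1)=\theta(c_2)=\theta(c_3)=\tfrac{n+1}{2}$ with $\theta(c_4)=\tfrac{n-1}{2}$ for odd $n$ — exactly the sizes appearing in Theorem \ref{Thm-2}.

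The remainder is then immediate: since $f(i)=\theta(c_i)/(2n+1)$ matches the helm-graph p.m.f.\ in each parity, $\eE(CSF_n)$ and $\eV(CSF_n)$ are computed by the identical sums already evaluated in Theorem \ref{Thm-2}, giving $\eE(CSF_n)=\frac{5n+1}{2n+1}$ and the two-case formula for $\eV(CSF_n)$. The one genuinely delicate step is the odd case, where both the inner and outer cycles are odd: here I must check that the additional outer-cycle edges can be respected while still producing three classes of size $\tfrac{n+1}{2}$ and one of size $\tfrac{n-1}{2}$. Once that explicit colouring is fixed (as in Figure \ref{fig:e-sfl}), no further calculation is required and the statement follows.
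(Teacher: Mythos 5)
Your proposal is correct and follows essentially the same route as the paper, which simply declares the proof ``similar to that of Theorem \ref{Thm-2}'' with Figure \ref{fig:e-csfl} as illustration: both arguments reduce to showing that $\chi_e(CSF_n)=4$ with colour-class sizes $\frac{n+2}{2},\frac{n}{2},\frac{n}{2},\frac{n}{2}$ (even $n$) and $\frac{n+1}{2},\frac{n+1}{2},\frac{n+1}{2},\frac{n-1}{2}$ (odd $n$), identical to the helm graph, so the p.m.f.\ and hence mean and variance carry over. In fact you supply details the paper omits — the lower-bound argument that three colours fail and the verification that the outer-cycle edges do not disturb the sunflower colouring, including the delicate odd case — which makes your write-up a strictly more complete version of the paper's proof.
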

 \begin{proof}~
The proof is similar to that of Theorem \ref{Thm-2} (see Figure \ref{fig:e-csfl} for illustration). 
 \end{proof}
 
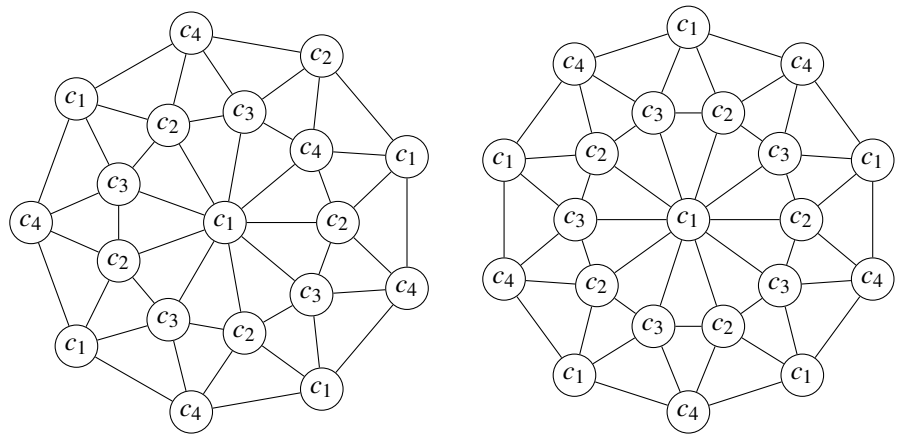
\begin{figure}[h!]
\begin{center}
\begin{tikzpicture}[scale=0.85] 
\vertex (v1) at (0:1.75) []{$c_2$};
\vertex (v2) at (320:1.75) []{$c_3$};
\vertex (v3) at (280:1.75) []{$c_2$};
\vertex (v4) at (240:1.75) []{$c_3$};
\vertex (v5) at (200:1.75) []{$c_2$};
\vertex (v6) at (160:1.75) []{$c_3$};
\vertex (v7) at (120:1.75) []{$c_2$};
\vertex (v8) at (80:1.75) []{$c_3$};
\vertex (v9) at (40:1.75) []{$c_4$};
\vertex (v) at (0:0) []{$c_{1}$};
\vertex (u1) at (20:3) []{$c_1$};
\vertex (u2) at (340:3) []{$c_4$};
\vertex (u3) at (300:3) []{$c_1$};
\vertex (u4) at (260:3) []{$c_4$};
\vertex (u5) at (220:3) []{$c_1$};
\vertex (u6) at (180:3) []{$c_4$};
\vertex (u7) at (140:3) []{$c_1$};
\vertex (u8) at (100:3) []{$c_4$};
\vertex (u9) at (60:3) []{$c_2$};
\path 
(v1) edge (v2)
(v2) edge (v3)
(v3) edge (v4)
(v4) edge (v5)
(v5) edge (v6)
(v6) edge (v7)
(v7) edge (v8)
(v8) edge (v9)
(v9) edge (v1)
(v1) edge (v)
(v2) edge (v)
(v3) edge (v)
(v4) edge (v)
(v5) edge (v)
(v6) edge (v)
(v7) edge (v)
(v8) edge (v)
(v9) edge (v)
(u1) edge (v1)
(u1) edge (v9)
(u2) edge (v1)
(u2) edge (v2)
(u3) edge (v2)
(u3) edge (v3)
(u4) edge (v3)
(u4) edge (v4)
(u5) edge (v4)
(u5) edge (v5)
(u6) edge (v5)
(u6) edge (v6)
(u7) edge (v6)
(u7) edge (v7)
(u8) edge (v7)
(u8) edge (v8)
(u9) edge (v8)
(u9) edge (v9)
(u1) edge (u2)
(u2) edge (u3)
(u3) edge (u4)
(u4) edge (u5)
(u5) edge (u6)
(u6) edge (u7)
(u7) edge (u8)
(u8) edge (u9)
(u9) edge (u1)
;
\end{tikzpicture}
\qquad 
\begin{tikzpicture}[scale=0.85] 
\vertex (v1) at (0:1.75) []{$c_2$};
\vertex (v2) at (324:1.75) []{$c_3$};
\vertex (v3) at (288:1.75) []{$c_2$};
\vertex (v4) at (252:1.75) []{$c_3$};
\vertex (v5) at (216:1.75) []{$c_2$};
\vertex (v6) at (180:1.75) []{$c_3$};
\vertex (v7) at (144:1.75) []{$c_2$};
\vertex (v8) at (108:1.75) []{$c_3$};
\vertex (v9) at (72:1.75) []{$c_2$};
\vertex (v10) at (36:1.75) []{$c_3$};
\vertex (v) at (0:0) []{$c_{1}$};
\vertex (u1) at (18:3) []{$c_1$};
\vertex (u2) at (342:3) []{$c_4$};
\vertex (u3) at (306:3) []{$c_1$};
\vertex (u4) at (270:3) []{$c_4$};
\vertex (u5) at (234:3) []{$c_1$};
\vertex (u6) at (198:3) []{$c_4$};
\vertex (u7) at (162:3) []{$c_1$};
\vertex (u8) at (126:3) []{$c_4$};
\vertex (u9) at (90:3) []{$c_1$};
\vertex (u10) at (54:3) []{$c_4$};
\path 
(v1) edge (v2)
(v2) edge (v3)
(v3) edge (v4)
(v4) edge (v5)
(v5) edge (v6)
(v6) edge (v7)
(v7) edge (v8)
(v8) edge (v9)
(v9) edge (v10)
(v10) edge (v1)
(v1) edge (v)
(v2) edge (v)
(v3) edge (v)
(v4) edge (v)
(v5) edge (v)
(v6) edge (v)
(v7) edge (v)
(v8) edge (v)
(v9) edge (v)
(v10) edge (v)
(u1) edge (u2)
(u2) edge (u3)
(u3) edge (u4)
(u4) edge (u5)
(u5) edge (u6)
(u6) edge (u7)
(u7) edge (u8)
(u8) edge (u9)
(u9) edge (u10)
(u10) edge (u1)
(u1) edge (v1)
(u1) edge (v10)
(u2) edge (v1)
(u2) edge (v2)
(u3) edge (v2)
(u3) edge (v3)
(u4) edge (v3)
(u4) edge (v4)
(u5) edge (v4)
(u5) edge (v5)
(u6) edge (v5)
(u6) edge (v6)
(u7) edge (v6)
(u7) edge (v7)
(u8) edge (v7)
(u8) edge (v8)
(u9) edge (v8)
(u9) edge (v9)
(u10) edge (v9)
(u10) edge (v10)
;
\end{tikzpicture}
\end{center}
\caption{\small Equitable colouring of a closed sunflower graphs}\label{fig:e-csfl}
\end{figure}

A \textit{blossom graph} $Bl_n$ is the graph obtained by joining all vertices of the outer cycle of a closed sunflower graph $CSF_n$ to its central vertex. The equitable chromatic parameters of blossom graphs are determined in the following result.

 \begin{theorem}\label{Thm-7}
For the blossom graph $Bl_n$, we have $$\eE(Bl_n)=\frac{(n+1)^2}{2n+1}$$
and $$\eV(Bl_n)=\frac{n^4+2n^3+2n^2+n}{3(2n+1)^2}.$$
\end{theorem}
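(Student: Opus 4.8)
\emph{Proof proposal.} The plan is to follow the same template as the proof of Theorem~\ref{Thm-4}, since the only structural feature that actually drives the computation is that the central vertex is universal. First I would label the vertices: let $v$ be the central vertex, let $v_1,\dots,v_n$ be the inner rim forming a cycle, and let $u_1,\dots,u_n$ be the outer vertices forming the outer cycle, with each $u_i$ adjacent to the consecutive rim vertices $v_i,v_{i+1}$ (indices taken mod $n$). In $Bl_n$ the central vertex is joined to all $2n$ of these vertices, so $v$ has degree $2n$ and its colour cannot be repeated; hence the colour class of $v$ is a singleton, and consequently every other colour class in an equitable colouring has size $1$ or $2$.

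Next I would pin down the equitable chromatic number and the class-size profile. Since $v$ occupies a singleton class, the remaining $2n$ vertices must be partitioned into classes of size at most $2$; if $a$ of them have size $1$ and $b$ have size $2$, then $a+2b=2n$, so the number of non-central colours $a+b$ is minimised, and equals $n$, exactly when $a=0$ and $b=n$. Thus an equitable colouring uses $n+1$ colours, with $n$ classes of size $2$ together with the class $\{v\}$, and this profile is forced. To confirm that $n$ independent pairs can genuinely be realised, I would exhibit the explicit colouring $c_i=\{v_i,u_{i+1}\}$ for $i=1,\dots,n$ and $c_{n+1}=\{v\}$. A direct check shows each pair $\{v_i,u_{i+1}\}$ is independent, because the outer neighbours of $v_i$ are $u_{i-1}$ and $u_i$, and $i+1\notin\{i-1,i\}$ for $n\ge 3$, while adjacent inner vertices, adjacent outer vertices, and the triangle cross-edges all receive distinct colours; hence this is a proper equitable $(n+1)$-colouring in the canonical ordering.

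With the profile established, the random variable $X$ has the same probability mass function as in the flower graph, namely $f(i)=\frac{2}{2n+1}$ for $i=1,\dots,n$ and $f(n+1)=\frac{1}{2n+1}$. From here the computation is identical to Theorem~\ref{Thm-4}: using $\sum_{i=1}^{n} i=\frac{n(n+1)}{2}$ gives $\eE(Bl_n)=\frac{n(n+1)}{2n+1}+\frac{n+1}{2n+1}=\frac{(n+1)^2}{2n+1}$, and using $\sum_{i=1}^{n} i^2=\frac{n(n+1)(2n+1)}{6}$ gives $\eV(Bl_n)=\frac{n(n+1)}{3}+\frac{(n+1)^2}{2n+1}-\frac{(n+1)^4}{(2n+1)^2}$, which simplifies to $\frac{n^4+2n^3+2n^2+n}{3(2n+1)^2}$.

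The step I expect to be the genuine obstacle is the middle one. Unlike the flower graph, whose non-central part is a corona in which low-degree pendant vertices make an equitable pairing transparent, the non-central part of $Bl_n$ is a $4$-regular graph consisting of two nested cycles joined by the triangle cross-edges, so I must actually produce a valid partition into $n$ independent pairs rather than take it for granted. The rotational pairing $v_i\mapsto u_{i+1}$ resolves this for every $n\ge 3$, after which the passage from the p.m.f.\ to the closed-form mean and variance is routine and coincides term by term with Theorem~\ref{Thm-4}.
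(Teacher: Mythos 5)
Your proposal is correct and takes essentially the same route as the paper: the paper proves Theorem~\ref{Thm-7} by deferring to the computation of Theorem~\ref{Thm-4} (singleton class $\{v\}$ plus $n$ doubleton classes, hence the same p.m.f.\ and arithmetic), with the realising colouring exhibited only pictorially in Figure~\ref{fig:e-bl}. Your rotational pairing $\{v_i,u_{i+1}\}$ is exactly the colouring that figure depicts, so the only difference is that you verify its independence explicitly instead of by illustration.
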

\begin{proof}~
The proof is similar to that of Theorem \ref{Thm-4} (see Figure \ref{fig:e-bl} for illustration). 
\end{proof}

\begin{figure}[h!]
\centering
\begin{tikzpicture}[auto,node distance=1.75cm,
thick,main node/.style={circle,draw,font=\sffamily\Large\bfseries}]
\vertex (v1) at (0:1.75) []{$c_1$};
\vertex (v2) at (320:1.75) []{$c_2$};
\vertex (v3) at (280:1.75) []{$c_3$};
\vertex (v4) at (240:1.75) []{$c_4$};
\vertex (v5) at (200:1.75) []{$c_5$};
\vertex (v6) at (160:1.75) []{$c_6$};
\vertex (v7) at (120:1.75) []{$c_7$};
\vertex (v8) at (80:1.75) []{$c_8$};
\vertex (v9) at (40:1.75) []{$c_9$};
\vertex (v) at (0:0) []{$c_{10}$};
\vertex (u1) at (20:3) []{$c_2$};
\vertex (u2) at (340:3) []{$c_3$};
\vertex (u3) at (300:3) []{$c_4$};
\vertex (u4) at (260:3) []{$c_5$};
\vertex (u5) at (220:3) []{$c_6$};
\vertex (u6) at (180:3) []{$c_7$};
\vertex (u7) at (140:3) []{$c_8$};
\vertex (u8) at (100:3) []{$c_9$};
\vertex (u9) at (60:3) []{$c_1$};
\path 
(v1) edge (v2)
(v2) edge (v3)
(v3) edge (v4)
(v4) edge (v5)
(v5) edge (v6)
(v6) edge (v7)
(v7) edge (v8)
(v8) edge (v9)
(v9) edge (v1)
(v1) edge (v)
(v2) edge (v)
(v3) edge (v)
(v4) edge (v)
(v5) edge (v)
(v6) edge (v)
(v7) edge (v)
(v8) edge (v)
(v9) edge (v)
(u1) edge (v1)
(u1) edge (v9)
(u2) edge (v1)
(u2) edge (v2)
(u3) edge (v2)
(u3) edge (v3)
(u4) edge (v3)
(u4) edge (v4)
(u5) edge (v4)
(u5) edge (v5)
(u6) edge (v5)
(u6) edge (v6)
(u7) edge (v6)
(u7) edge (v7)
(u8) edge (v7)
(u8) edge (v8)
(u9) edge (v8)
(u9) edge (v9)
(u1) edge (u2)
(u2) edge (u3)
(u3) edge (u4)
(u4) edge (u5)
(u5) edge (u6)
(u6) edge (u7)
(u7) edge (u8)
(u8) edge (u9)
(u9) edge (u1)
(u1) edge (v)
(u2) edge (v)
(u3) edge (v)
(u4) edge (v)
(u5) edge (v)
(u6) edge (v)
(u7) edge (v)
(u8) edge (v)
(u9) edge (v)
;
\end{tikzpicture}
\caption{\small Equitable colouring of a blossom graphs}\label{fig:e-bl}
\end{figure}
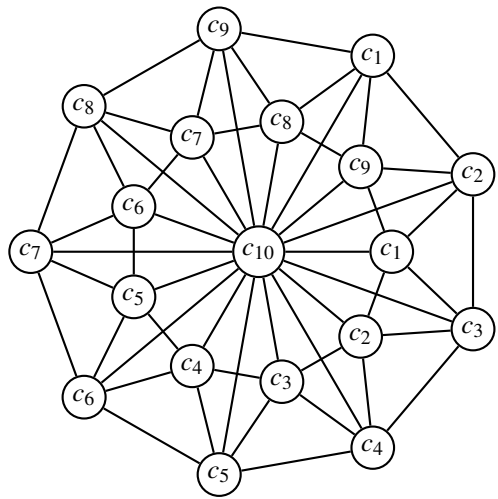

\section{Conclusion}

In this paper, we discussed mean and variance, two important statistical parameters, related to equitable coloring of certain cycle related graphs.  

The coloring parameters can be used in various areas like project management, communication networks, optimisation problems etc. The concepts of equitable chromatic parameters can be utilised in certain practical and industrial problems like resource allocation, resource smoothing, inventory management, service and distribution systems etc.

The $\chi_e$-chromatic mean and variance of many other graph classes are yet to be studied.  Further investigations on the sum, mean and variance corresponding to the $\chi_e$-coloring of many other standard graphs seem to be promising open problems. Studies on the sum, mean and variance corresponding to different types of edge colorings, map colorings, total colorings etc. of graphs also offer much for future studies.

We can associate many other statistical parameters to graph coloring and other notions like covering, matching etc.  All these facts highlight a wide scope for future studies in this area.


\end{document}